\documentclass[10pt,a4paper]{article}

\usepackage{amssymb,amsmath,amsthm}
\usepackage{graphicx,graphics}
\usepackage{subfigure}
\usepackage{listings}
\usepackage{algorithm}
\usepackage{algorithmic}
\usepackage{tabularx}
\usepackage{appendix}
\bibliographystyle{amsplain}

\newtheorem{thm}{Theorem}

\newtheorem{lma}{Lemma}

\theoremstyle{remark}
\newtheorem{rem}{Remark}

\numberwithin{equation}{section}

\def\mbf#1{\mathchoice{\hbox{\boldmath $\displaystyle #1$}}
       {\hbox{\boldmath $\textstyle #1$}}
       {\hbox{\boldmath $\scriptstyle #1$}}
       {\hbox{\boldmath $\scriptscriptstyle #1$}}}

\newcommand{\tn}{\mspace{0.5mu} |\mspace{-1.2mu}|\mspace{-1.2mu}|\mspace{0.5mu}}
\newcommand{\Keywords}[1]{\par\noindent{\small{\em Keywords\/}: #1}}

\algsetup{indent=2em}

\date{June 19, 2012}

\begin{document}
\title{A Posteriori Error Analysis of Component Mode Synthesis for the
  Frequency Response Problem} \author{ H{\aa}kan Jakobsson
  \thanks{Department of Mathematics and
    Mathematical Statistics, Ume{\aa} University, SE-901 87 Ume{\aa},
    Sweden, email: hakan.jakobsson@math.umu.se } \and Mats G. Larson \thanks{Department of Mathematics and Mathematical Statistics,
    Ume{\aa} University, SE-901 87 Ume{\aa}, Sweden, email: mats.larson@math.umu.se }}

\maketitle

\begin{abstract}
  We consider the frequency response problem and derive a posteriori
  error estimates for the discrete error in a reduced finite element
  model obtained using the component mode synthesis (CMS) method. We
  provide estimates in a linear quantity of interest and the energy
  norm. The estimates reflect to what degree each CMS subspace
  influence the overall error in the reduced solution. This enables
  automatic error control through adaptive algorithms that determine
  suitable dimensions of each subspace.  We illustrate the theoretical
  results by including several numerical examples.
\end{abstract}

\Keywords{Component mode synthesis; model reduction; reduced order modeling; a posteriori error estimation; frequency response problem.}

\section{Introduction}
Due to the large scale of finite element models of complex structures,
it may be necessary to use reduced finite element models with much
fewer degrees of freedom when performing frequency response analysis
of a structure over a large range of frequencies. Having control over
the reduction error in the approximation is then highly important. In
this paper we derive a posteriori error estimates for the discrete
error in the reduced solution to the frequency response problem
obtained using component mode synthesis (CMS) \cite{craig-bampton1968,
  craig1977chang, hurty1965, bourquin1990, bourquin1992}.

The results in this paper complement the a posteriori analysis
developed in \cite{jakbenlar2010} where CMS was applied to an elliptic
model problem, and the a posteriori analysis in \cite{jaklar2011},
where the elliptic eigenvalue problem was considered. Similar
techniques are used and results obtained here as in the previous two
publications. The frequency response problem does however require an
explicit treatment due to the indefinite nature of the problem.  We
further present a new adaptive strategy suitable for frequency sweep
analysis.

Other work on error analysis for CMS class methods include results by
Bourquin who considered the elliptic eigenvalue problem and derived a
priori bounds for the error in eigenvalues and eigenmodes
\cite{bourquin1990, bourquin1992}; and results by Yang, Gao, Bai, Li,
Lee, Husbands, and Ng \cite{yang2005algebraic} for the automated
multilevel substructuring method \cite{bennighof2004}, who derived a
criterion for mode truncation; together with results by Elssel and
Voss \cite{elssel2007priori} who showed that the same criterion
guarantees control of the error in the smallest eigenvalue in the
reduced problem.

Previous work on frequency response analysis based on CMS include that
by Bennighof and Kaplan \cite{bennighoffrequency}, who developed an
iterative method in which the response is split into two components,
one component near resonance and one component representing the
remainder of the response. The near resonant component is captured
using approximate global eigenmodes, and the remainder of the response
using substructure modes and iteration. A similar method was also
proposed by Ko and Bai \cite{ko2008high}. Error estimates for these
methods have, to the author's knowledge, not yet been developed.

Research on duality based a posteriori error estimation and adaptive
refinement strategies in finite element modeling has been ongoing
since the 1990's. For a general introduction to the subject in context
of finite element analysis we point the reader to \cite{rannbang2003,
  giles2003adjoint, eriksson1995}, and the references therein. We also
refer to \cite{irimie2001residual,
  larson2001posteriori,oden2005posteriori,rannacher1998posteriori} for
results that we feel are especially relevant in context of structural
mechanics.

The remainder of this paper is organized as follows. In Section 2 we
provide some prerequisite material and present the frequency response
problem in linear elasticity; in Section 3 we give an account of the
Craig-Bampton CMS in a variational setting; in Section 4 we derive an
a posteriori error estimates for the error in the displacements in the
reduced model measured in the energy norm; in Section 5 we demonstrate
our results in several numerical examples; and in Section 6 we
summarize our findings.

\section{The Frequency Response Problem}
Let $\Omega$ be a bounded domain in $\mathbb R^d$, $d=2$, $3$, with
boundary $\partial \Omega=\Gamma_D \cup \Gamma_N$, where $\Gamma_D
\cap \Gamma_N = \emptyset$, let $\mathcal T$ be a subdivision of
$\Omega$ into, for instance, triangles $(d=2)$ or tetrahedra $(d=3)$,
and let $V^h$ be the space of continuous, piecewise $p$th order vector
polynomials on $\mathcal T$ defined by $V^h=\{\mbf v \in
[H^1(\Omega)]^d : \,\mbf v\lvert_{\Gamma_D} = \mbf 0, \,\mbf v\lvert_T
\in [\mathcal{P}_p(T)]^d, \, \forall T \in \mathcal T\}$, where
$\mathcal{P}_p(T)$ is the space of $p$th order polynomials on element
$T$. Let further $a(\cdot, \cdot)$ be the bounded, coercive bilinear
form on $V^h \times V^h$ defined by $ a(\mbf{v},\mbf{w}) = 2(\mu
\mbf{\varepsilon}(\mbf{v}) : \mbf{\varepsilon}(\mbf{w})) + (\kappa
\nabla \cdot \mbf{v}, \nabla \cdot \mbf{w})$, where $
\mbf{\varepsilon}(\mbf{v}) : \mbf{\varepsilon}(\mbf{w})=\sum_{i,j=1}^d
\varepsilon_{ij}(\mbf{v})\varepsilon_{ij}(\mbf{w})$, let $(\cdot,
\cdot)$ denote the $L^2$ inner product on $V^h \times V^h$, and let
$b(\cdot)$ be the bounded linear form on $V^h$ given by $b(\mbf
v)=(\mbf{f}, \mbf{v}) + (\mbf{g}_N,\mbf{v})_{\Gamma_N}$, where $\mbf
f$ is a body force, and $\mbf g_N$ is a traction force.

The finite element frequency response problem reads: find $\mbf U
\in V^h$ such that
\begin{equation}\label{eq:fem_problem}
  a(\mbf U, \mbf v) + \iota\omega(\mathcal D\mbf U, \mbf v) - \omega^2(\mbf
  U, \mbf v) = b(\mbf v), \quad \forall \mbf v \in V^h.
\end{equation}
Given a basis in $V^h$, the following matrix form of
\eqref{eq:fem_problem} is obtained:
\begin{align}
  \mathbf K \bar{\mathbf U} + \iota\omega \mathbf D \bar{\mathbf U} - \omega^2\mathbf M \bar{\mathbf U} = \mathbf b,
\end{align}
where $\mathbf K$ is the stiffness matrix, $\mathbf M$ is the mass
matrix, $\mathbf D$ is a damping matrix, assumed to be on the form
$\mathbf D = \alpha \mathbf K + \beta \mathbf M$, $\alpha \geq 0$,
$\beta \geq 0$, i.e. Rayleigh damping, and $\mathbf b$ is the load
vector. The vector of coefficients of $\mbf U$ is denoted by
$\bar{\mathbf U}$.

\section{Component Mode Synthesis}

Let $\mathcal{S}=\{\Omega_i\}_{i=1}^n$ be a partition of $\Omega$ into
$n$ connected subdomains $\Omega_i$, such that each $\Omega_i =
\cup_{K\in\mathcal K_i} K$, for some subset $\mathcal K_i \subset
\mathcal K$. Let the interface between the subdomains be denoted by
$\Gamma$. An $a$-orthogonal decomposition
\begin{align}
  V^h = \bigoplus_{i=1}^n V^h_i,
\end{align}
of $V^h$ associated with $\mathcal S$ and $\Gamma$ may be constructed
by letting $V^h_i=\{ \mbf v \in V^h: v\lvert_{\Omega \setminus
  \Omega_i}=0\}$, $i=1, \ldots, n$, and by letting
\begin{align}
  V^h_{0}=\{ \mathcal{E} \mbf \nu \in V^h : \mbf \nu \in V^h\lvert_{\Gamma} \},
\end{align}
where $V^h\lvert_{\Gamma}$ denotes the trace space of $V^h$ associated
with $\Gamma$, and $\mathcal{E} \mbf \nu \in V^h$ denotes the energy minimizing
extension of a function $\mbf \nu \in V^h\lvert_{\Gamma}$ to $\Omega$. That
is, $\mathcal E \mbf \nu$ is defined by the problem: find
$\mathcal{E}\mbf{\nu} \in V^h$, such that
\begin{align}\label{eq:extension}
	a(\mathcal{E}\mbf{\nu}, \mbf{v}) &= 0, \quad \forall \mbf{v}
        \in V^h_i, \quad i=1, \ldots, n,
        \\ \mathcal{E}\mbf{\nu}|_\Gamma&=\mbf{\nu}.
\end{align}

A basis in each subspace $V^h_i$, $i=0, \ldots, n$, assumed to be of
dimension $k_i$, is obtained from the discrete eigenvalue problems:
find $(\Lambda_i, \mbf Z_i) \in \mathbb R \times V^h_i$ for $i=0,
\ldots, n$, such that
\begin{alignat}{3}\label{eq:disc_eigenprob}
  a(\mbf Z_i, \mbf v) & = & \Lambda_i (\mbf Z_i, \mbf v), 
  & \quad \forall \mbf v \in V^h_i, \quad i = 0, \ldots, n.
\end{alignat}
A reduced subspace $V^{h, \mbf{m}} \subset V^h$, where $\mbf
m=(m_i)_{i=0}^n$ is a multi-index, may be defined by letting
\begin{align}
  V^{h,\mbf{m}}=\bigoplus_{i=0}^n V_i^{h, m_i},
\end{align}
where
\begin{align}
  V^h_i \supset V^{h, m_i}_i=\mathrm{span}\{\mbf Z_{i, j}\}_{j=1}^{m_i}, \quad i=0, \ldots, n.
\end{align}

\subsection{The Reduced Problem}

Introducing the subspace $V^{h, \mbf{m}}$ in the model we get the
following reduced problem: find $\mbf U^{\mbf{m}} \in V^{h,\mbf{m}}$
such that
\begin{equation}\label{eq:reduced_problem}
  a(\mbf U^{\mbf m}, \mbf v) + \iota\omega(\mathcal D\mbf U^{\mbf m}, \mbf v) - \omega^2(\mbf
  U^{\mbf m}, \mbf v) = b(\mbf v), \quad \forall \mbf v \in V^{h, \mbf m}.
\end{equation}
Collecting the coefficients of the reduced basis functions columnwise
in the matrix $\mathbf V^{\mbf m}$, the matrix form of
\eqref{eq:reduced_problem} reads
\begin{align}
  \mathbf K^{\mbf m} \bar{\mathbf U}^{\mbf m} + \iota\omega \mathbf D^{\mbf m} \bar{\mathbf U}^{\mbf m} - \omega^2\mathbf M^{\mbf m} \bar{\mathbf U}^{\mbf m} = \mathbf b^{\mbf m},
\end{align}
where 
\begin{align}
  \mathbf K^{\mbf m}&=(\mathbf V^{\mbf m})^T \mathbf K \mathbf V^{\mbf m},\\
  \mathbf D^{\mbf m}&=(\mathbf V^{\mbf m})^T \mathbf D \mathbf V^{\mbf m},\\
  \mathbf M^{\mbf m}&=(\mathbf V^{\mbf m})^T \mathbf M \mathbf V^{\mbf m},\\
  \mathbf b^{\mbf m}&=(\mathbf V^{\mbf m})^T \mathbf b.
\end{align}

Next, we turn to error estimation and derive a posteriori error
estimates for the discrete error $\mbf E = \mbf U - \mbf U^{\mbf m}$
in the reduced problem.

\section{A Posteriori Error Analysis}
\subsection{Preliminaries}
We begin by remarking that for the error $\mbf E$ holds the Galerkin
orthogonality property
\begin{align}\label{eq:galerkin_orthogonality}
  a(\mbf E, \mbf{v}) + \iota\omega(\mbf E, \mbf v) -\omega^2(\mbf E, \mbf v)=0, \quad \forall \mbf{v}
        \in V^{h, \mbf{m}},
\end{align}
obtained by subtracting \eqref{eq:reduced_problem} from the finite
element formulation \eqref{eq:fem_problem}.

We further state the following notation, first introduced in
\cite{jakbenlar2010}. Here and below $\mathcal{R}_i: V^h \rightarrow
V^h_i$, $i=0, \ldots, n$, denote Ritz projectors, that is, the
projector defined by
\begin{align}\label{eq:ritz_projection}
  a(\mbf w - \mathcal R_i\mbf w, \mbf v)=0, \quad \forall \mbf v \in V^h_i,
\end{align}
and $\mathcal{R}:V^h \rightarrow V^h$, denotes decomposition, such that
\begin{align}
  \mathcal{R} \mbf w=\sum_{i=0}^n \mathcal{R}_i \mbf w.
\end{align}
The operators $\mathcal{P}_{i}^{m_i}:V^h_i
\rightarrow V^{h, m_i}_i$, $i=0, \ldots, n$, denote series
expansion in $V^{h, m_i}_i$, such that
\begin{align}
\mathcal
P_i^{m_i} \mbf w=\sum_{j=1}^{m_i} (\mbf w, \mbf Z_{i,j})\mbf Z_{i,j},
\end{align}
and the operator $\mathcal{P}^{\mbf{m}}:V^h \rightarrow V^{h,
  \mbf{m}}$, similarly denotes expansion in the space $V^{h, \mbf{m}}$,
such that 
\begin{align}
  \mathcal{P}^{\mbf{m}} \mbf u=\sum_{i=0}^n \mathcal{P}_i^{m_i}
  \mathcal{R}_i \mbf u.
\end{align}
We further let $\mbf{R}^h_i(\mbf{w}) \in
V^h_i$, $i=0, \ldots, n$, denote the discrete subspace residuals defined for $\mbf w \in V^h$ by
\begin{align}\label{eq:discrete_subspace_residual}
  (\mbf{R}^h_i(\mbf{w}), \mbf{v})&=b(\mbf{v}) - a(\mbf w, \mbf{v}) - \iota\omega(\mathcal D\mbf w, \mbf v) + \omega^2(\mbf w, \mbf v),
  \quad \forall \mbf{v} \in V^h_i.
\end{align}
We note here that the discrete residual $\mbf R^h(\mbf w) \in V^h$ is
defined by
\begin{align}\label{eq:discrete_subspace_residual2}
  (\mbf{R}^h(\mbf{w}), \mbf{v})&=b(\mbf{v}) - a(\mbf w, \mbf{v}) - \iota\omega(\mathcal D\mbf w, \mbf v) + \omega^2(\mbf w, \mbf v),
  \quad \forall \mbf{v} \in V^h,
\end{align}
and that the $L^2$ projection $\mathcal P_i\mbf R^h(\mbf w)$ of $\mbf R^h(\mbf w)$ onto $V^h_i$ is given by
\begin{align}
  (\mbf{R}^h(\mbf{w}) - \mathcal P_i\mbf R^h(\mbf w), \mbf{v})&=\quad \forall \mbf{v} \in V^h_i.
\end{align}
Hence, $\mbf{R}^h_i(\mbf w)=\mathcal P_i\mbf R^h(\mbf w)$, and
\begin{align}\label{eq:residual_relationship}
  (\mbf R^h_i(\mbf w), \mbf v)=(\mbf R^h(\mbf w), \mbf v), \quad \forall \mbf v \in V^h_i.
\end{align}
Finally, for $\alpha\geq0$ we define the operators
$\mathcal{L}_i^{\alpha}:V^h_i \rightarrow V^h_i$,
$i=0, \ldots, n$, by
\begin{align}\label{eq:discrete_pde_operator}
	(\mathcal{L}_i^{\alpha} \mbf{u},
  \mbf{v})&=\sum_{j=1}^{n_i}\Lambda_{i,j}^\alpha(\mbf{u},
  \mbf{Z}_{i,j})(\mbf{Z}_{i,j}, \mbf{v}), \quad \forall \mbf{u},
  \mbf{v} \in V^h_i, \quad i=0, \ldots, n,
\end{align}
where the $\mbf{Z}_{i,j} \in V^h_i$, $i=0, \ldots, n$, are given by
\eqref{eq:disc_eigenprob}.  We summarize some properties of
$\mathcal{L}_i^\alpha$ in the following lemma whose proof is straight
forward, and may be found in e.g \cite{jakbenlar2010}.
\begin{lma}
For $\mathcal L_i^\alpha$, $i=0, \ldots, n$, as defined in
\eqref{eq:discrete_pde_operator}, the following properties hold:
\begin{align}
  (\mathcal{L}_i\mbf{u}, \mbf{v})&=a(\mbf{u}, \mbf{v}), &\quad \forall
  \mbf{u}, \mbf{v} \in V^h_i, &\quad i=0, \ldots,
  n,\label{eq:Lproperties.1}\\ 
  \|\mathcal{L}_i^{1/2}\mbf{u}\|^2 &=\tn \mbf{u} \tn^2, &\quad
  \forall \mbf{u} \in V^h_i, &\quad i=0, \ldots,
  n, \label{eq:Lproperties.2} \\ 
  \|(I-\mathcal{P}_i^{m_i})\mbf{u}\|
  &\leq \frac{1}{\Lambda_{i, m_i+1}^\alpha}\|\mathcal{L}_i^\alpha
  \mbf{u}\|, &\quad \forall \mbf{u} \in V^h_i, &\quad i=0,
  \ldots, n. \label{eq:approximation_property}
\end{align}
\end{lma}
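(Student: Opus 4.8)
The plan is to exploit the spectral decomposition furnished by the discrete eigenvalue problem \eqref{eq:disc_eigenprob}. The first observation I would establish is that the eigenfunctions $\{\mbf{Z}_{i,j}\}_{j=1}^{n_i}$ may be normalized to form an $L^2$-orthonormal basis of $V^h_i$ that simultaneously diagonalizes $a(\cdot,\cdot)$; that is, $(\mbf{Z}_{i,j}, \mbf{Z}_{i,k}) = \delta_{jk}$ and $a(\mbf{Z}_{i,j}, \mbf{Z}_{i,k}) = \Lambda_{i,j}\delta_{jk}$. This is the standard consequence of the symmetric generalized eigenvalue problem $a(\mbf{Z}_i, \mbf{v}) = \Lambda_i(\mbf{Z}_i, \mbf{v})$ restricted to the finite-dimensional space $V^h_i$, and I would also fix the convention that the eigenvalues are ordered increasingly, $\Lambda_{i,1} \le \cdots \le \Lambda_{i,n_i}$. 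With this in hand, every $\mbf{u} \in V^h_i$ admits the expansion $\mbf{u} = \sum_{j=1}^{n_i}(\mbf{u}, \mbf{Z}_{i,j})\mbf{Z}_{i,j}$, and the action of $\mathcal{L}_i^\alpha$ is read off directly from \eqref{eq:discrete_pde_operator} as $\mathcal{L}_i^\alpha \mbf{u} = \sum_{j=1}^{n_i} \Lambda_{i,j}^\alpha (\mbf{u}, \mbf{Z}_{i,j}) \mbf{Z}_{i,j}$, so that $\mathcal{L}_i^\alpha$ is simply multiplication by $\Lambda_{i,j}^\alpha$ in the eigenbasis.

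For \eqref{eq:Lproperties.1}, I would set $\alpha = 1$ and expand both $\mbf{u}$ and $\mbf{v}$ in the eigenbasis; using $a$-orthogonality $a(\mbf{Z}_{i,j}, \mbf{Z}_{i,k}) = \Lambda_{i,j}\delta_{jk}$ collapses $a(\mbf{u}, \mbf{v})$ to $\sum_j \Lambda_{i,j}(\mbf{u}, \mbf{Z}_{i,j})(\mbf{v}, \mbf{Z}_{i,j})$, which is exactly $(\mathcal{L}_i \mbf{u}, \mbf{v})$ by definition. For \eqref{eq:Lproperties.2}, I would compute the $L^2$ coefficients of $\mathcal{L}_i^{1/2}\mbf{u}$, namely $(\mathcal{L}_i^{1/2}\mbf{u}, \mbf{Z}_{i,l}) = \Lambda_{i,l}^{1/2}(\mbf{u}, \mbf{Z}_{i,l})$, and apply the Parseval identity for the orthonormal basis to obtain $\|\mathcal{L}_i^{1/2}\mbf{u}\|^2 = \sum_j \Lambda_{i,j}(\mbf{u}, \mbf{Z}_{i,j})^2$; comparing this with the expansion of $a(\mbf{u}, \mbf{u})$ from the first part and recalling $\tn \mbf{u}\tn^2 = a(\mbf{u}, \mbf{u})$ gives the equality.

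For the approximation property \eqref{eq:approximation_property}, I would use that $\mathcal{P}_i^{m_i}$ projects onto the span of the first $m_i$ eigenfunctions, so that $(I - \mathcal{P}_i^{m_i})\mbf{u} = \sum_{j=m_i+1}^{n_i}(\mbf{u}, \mbf{Z}_{i,j})\mbf{Z}_{i,j}$ and hence $\|(I - \mathcal{P}_i^{m_i})\mbf{u}\|^2 = \sum_{j=m_i+1}^{n_i}(\mbf{u}, \mbf{Z}_{i,j})^2$. The estimate then follows by bounding $\|\mathcal{L}_i^\alpha \mbf{u}\|^2 = \sum_j \Lambda_{i,j}^{2\alpha}(\mbf{u}, \mbf{Z}_{i,j})^2 \ge \sum_{j \ge m_i+1}\Lambda_{i,j}^{2\alpha}(\mbf{u}, \mbf{Z}_{i,j})^2$ from below, invoking the monotonicity $\Lambda_{i,j}^{2\alpha} \ge \Lambda_{i,m_i+1}^{2\alpha}$ for $j \ge m_i + 1$, factoring out the smallest such eigenvalue power, and taking square roots. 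No step here presents a genuine obstacle, since the entire argument is bookkeeping within the eigenbasis; the only points demanding care are making the spectral orthonormality setup precise and fixing the increasing ordering of the eigenvalues, so that the truncation bound in \eqref{eq:approximation_property} correctly uses $\Lambda_{i,m_i+1}$.
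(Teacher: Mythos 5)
Your proof is correct and is exactly the ``straightforward'' spectral argument the paper has in mind (it omits the proof, deferring to \cite{jakbenlar2010}): diagonalize $a(\cdot,\cdot)$ and $(\cdot,\cdot)$ simultaneously via the $L^2$-orthonormal eigenbasis of \eqref{eq:disc_eigenprob}, read off $\mathcal{L}_i^\alpha$ as multiplication by $\Lambda_{i,j}^\alpha$, and use Parseval plus the increasing ordering of the eigenvalues for the truncation bound. The only cosmetic point is that in the complex setting of the frequency response problem the squared coefficients should be written as moduli, $|(\mbf{u},\mbf{Z}_{i,j})|^2$, which changes nothing in the argument.
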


\subsection{A Posteriori Estimate in a Quantity of Interest}
First we show a straight forward a posteriori error estimate for
the discrete error in a quantity of interest defined by a linear
functional. The quantity of interest may for instance be the mean
stress on a part of the boundary, or the displacements near a point of
interest.

Let therefore $H(\cdot)=(\cdot, \mbf \psi)$, $\mbf \psi \in V^h$, be a
linear functional on $V^h$, and let the goal of solving
\eqref{eq:reduced_problem} be to accurately approximate $H(\mbf U)$.
We introduce the dual problem: find $\mbf{\Phi} \in V^h$, such that
\begin{align}\label{eq:discrete_dual_problem}
  a(\mbf{v}, \mbf{\Phi}) + \iota\omega(\mbf v, \mathcal D\mbf \Phi)-\omega^2(\mbf{v}, \mbf{\Phi})=H(\mbf{v}), \quad \forall \mbf{v} \in V^h.
\end{align}
Choosing $\mbf
v=\mbf E$ in \eqref{eq:discrete_dual_problem}, the error $H(\mbf E)$
may then be written
\begin{align}
  H(\mbf E)&=a(\mbf E, \mathcal R\mbf \Phi) + \iota\omega(\mathcal D\mbf E, \mathcal R\mbf \Phi) - \omega^2(\mbf E,
  \mathcal R\mbf \Phi) \label{eq:error_representation} \\
  &= \sum_{i=0}^n (\mbf R^h_i(\mbf U^{\mbf m}), \mathcal R_i\mbf \Phi) \\
  &= \sum_{i=0}^n (\mbf R^h(\mbf U^{\mbf m}), \mathcal R_i\mbf \Phi),
\end{align}
using \eqref{eq:residual_relationship}, and using the triangle inequality, the estimate
\begin{align}
  \lvert H(\mbf E) \rvert &\leq \sum_{i=0}^n \lvert (\mbf R^h(\mbf
  U^{\mbf m}), \mathcal R_i \mbf \Phi)
  \rvert, \label{eq:general_estimate}
\end{align}
immediately follows.

Now, using that $\{\mbf Z_{i,j}\}_{j=1}^{k_i}$ is an orthonormal basis
in $V^h_i$, we have for the residual $\mbf R^h_i(\mbf U^{\mbf m})$ and
the Ritz projection $\mathcal R_i \mbf \Phi$, respectively
\begin{align}
  \mbf R^h_i(\mbf U^{\mbf m}) &= \sum_{j=m_i+1}^{k_i} (\mbf R^h(\mbf U^{\mbf m}), \mbf Z_{i,j})\mbf Z_{i,j}, \\
  \mathcal R_i \mbf \Phi &= \sum_{j=1}^{k_i} \frac{a(\mbf \Phi, \mbf Z_{i,j})}{\lambda_{i,j}}\mbf Z_{i,j}.
\end{align}
Thus,
\begin{align}
  \lvert H(\mbf E) \rvert &\leq \sum_{i=0}^n\sum_{j=m_i+1}^{k_i} \frac{\lvert a(\mbf \Phi, \mbf Z_{i,j})(\mbf R^h(\mbf U^{\mbf m}), \mbf Z_{i.j}) \rvert}{\lambda_{i,j}}.
\end{align}
\begin{rem}
  The accuracy in the reduced model of course depends on the dimensions
  $m_i$ of the subspaces $V^{h, m_i}_i$, $i=0, \ldots, n$.  Looking at
  the estimate \eqref{eq:general_estimate}, each term
  \begin{align}\label{eq:indicators1}
    \eta_{J, i}= |(\mbf R^h(\mbf U^{\mbf m}), \mathcal
    R_i \mbf \Phi)|, \quad i=0, \ldots, n,
  \end{align}
  accounts for the contribution to the error $H(\mbf E)$ caused by
  truncating the basis in $V^{h, m_i}_i$, $i=0, \ldots, n$. The
  $\eta_{J, i}$ may then be used as a decision basis in an adaptive
  algorithm that automatically refines the subspaces contributing the
  most to the error $H(\mbf E)$.
\end{rem}
\begin{rem}
  To obtain $\eta_{J,i}$, we need the residual $\mbf R^h(\mbf U^{\mbf
    m})$ and the dual solution $\mbf \Phi$. The coefficient vector $\bar{\mathbf R}$ of the residual $\mbf
  R^h(\mbf U^{\mbf m})$ is given by the equation
  \begin{align}
    \mathbf M \bar{\mathbf R}=\mathbf b - \mathbf K \bar{\mathbf U}^{\mbf m} - \iota \mathbf D \bar{\mathbf U}^{\mbf m} + \omega^2 \mathbf M \bar{\mathbf U}^{\mbf m},
  \end{align}
  and the dual problem on matrix form reads
  \begin{align}
    \mathbf K \bar{\mathbf \Phi} + \iota \mathbf D \bar{\mathbf \Phi} - \omega^2 \mathbf M \bar{\mathbf \Phi}=\mathbf H,
  \end{align}
  The coefficient vector $\bar{ \mbf \Phi}_i$ of $\mathcal R_i \mbf
  \Phi$ is given by the equation
  \begin{align}
    \mathbf V_i^T \mathbf K \mathbf V_i \bar{\mbf \Phi}_i = \mathbf
    V_i^T \mathbf K \bar{\mbf \Phi}.
  \end{align}
  where $\mathbf V_i$ is a matrix containing the coefficients of a
  basis in $V^h_i$ in its columns. In the case of the modal basis
  $\{\mbf Z_{i,j}\}_{j=1}^{k_i}$, we then have $\mathbf V_i^T \mathbf
  K \mathbf V_i=\mathbf \Lambda_i$, where $\mathbf \Lambda_i$ is
  diagonal, and we obtain
  \begin{align}
     \eta_{J,i}&=\bar{\mathbf R}^T\mathbf M \mathbf V_i \bar{\mbf \Phi}_i \\
     &= \bar{\mathbf R}^T \mathbf M \mathbf V_i \mathbf \Lambda_i^{-1}\mathbf V_i^T \mathbf K \bar{\mbf \Phi} \\
    &=  (\mathbf b - \mathbf K \mathbf U^{\mbf m} - \iota \mathbf D \mathbf U^{\mbf m} + \omega^2 \mathbf M \mathbf U^{\mbf m})^T \mathbf V_i \mathbf \Lambda_i^{-1}\mathbf V_i^T \mathbf K \bar{\mbf \Phi} \\
     &=  (\mathbf b - \mathbf K \mathbf U^{\mbf m} - \iota \mathbf D \mathbf U^{\mbf m} + \omega^2 \mathbf M \mathbf U^{\mbf m})^T \mathbf W_i \hat{\mathbf \Lambda}_i^{-1}\mathbf W_i^T \mathbf K \bar{\mbf \Phi},
  \end{align}
  where we have used that $\mbf R^h(\mbf U^{\mbf m})$ is orthogonal to
  $V^{h, m_i}_i$ in the last equality, and introduced $\mathbf W_i$,
  which we assume contains the coefficients of the $k_i-m_i$
  eigenmodes $\{\mbf Z_{i,j}\}_{j=m_i+1}^{k_i}$, together with the diagonal matrix
  $\hat{\mathbf \Lambda}_i$ containing the corresponding eigenvalues.

In practice the dual problem is approximately solved, for instance
using a slightly larger reduced space $V^{h, \mbf d}$, where $\mbf
d=(d_i)_{i=0}^n$, and $m_i < d_i \leq k_i$, $i=0, \ldots, n$, compared
to what is used in the primal problem. Similarly, in the Ritz
projections of the dual solution onto the subspaces, approximations
may be used. Due to orthogonality it is then sufficient to project
onto the spaces $V^{h, d_i} \setminus V^{h,m_i}$, $i=0, \ldots, n$.
  
\end{rem}

\subsection{An Energy Norm Estimate}
The following a posteriori error estimate in
the energy norm $\tn \cdot \tn=\sqrt{a(\cdot, \cdot)}$ holds. 
\begin{thm}\label{thm:aposteriori_error_estimate}
Let $\mbf{U}$ and $\mbf{U}^{\mbf{m}}$ satisfy \eqref{eq:fem_problem}
and \eqref{eq:reduced_problem}, respectively. Then the
following a posteriori estimate holds for the energy norm of the
discrete error $\mbf{E} = \mbf{U} -
\mbf{U}^{\mbf{m}}$ in the approximation:
\begin{align}\label{thm:aposteriori_error_estimate.2}
  \tn \mbf{E} \tn &\leq \Bigl(\sqrt{I_1} + S(\omega)\sqrt{2I_2}\Bigr),
\end{align}
where $I_1=I_1(\mbf U^{\mbf m})$ and $I_2=I_2(\mbf U^{\mbf m})$ respectively, are defined by
\begin{align}
  I_1(\mbf U^{\mbf m}) &= \sum_{i=0}^n \frac{1}{\Lambda_{i,m_i+1}} \|  
  \mbf{R}^h_i (\mbf{U}^{\mbf{m}})\|^2, \\
  I_2(\mbf U^{\mbf m}) &= \sum_{i=0}^n \frac{1}{\Lambda_{i,m_i+1}^2} \| 
  \mbf{R}^h_i (\mbf{U}^{\mbf{m}})\|^2, \label{eq:I2}
\end{align}
and $S(\omega)$ is a stability factor, depending on the finite
element eigenvalues $\{\lambda_j^h\}_{j=1}^N$ and the frequency
$\omega$, defined by
\begin{align}
  S(\omega)=\sup_j \frac{\sqrt{(\omega^4 + \omega^2 c_j^2)\lambda^h_j}}{\sqrt{(\lambda_j^h - \omega)^2 + \omega^2c_j^2}},
\end{align}
where $c_j=\alpha \lambda^h_j + \beta$.

The terms $I_1$ and $I_2$ are given in matrix form by
  \begin{align}
    I_1(\bar{\mathbf U}^{\mbf m}) = \sum_{i=0}^N \frac{1}{\Lambda_{i, m_i+1}} {\bar{\mathbf R}_i^T \mathbf M_i \bar{\mathbf R}_i}, \\
    I_2(\bar{\mathbf U}^{\mbf m}) = \sum_{i=0}^N \frac{1}{\Lambda_{i, m_i+1}^2} {\bar{\mathbf R}_i^T \mathbf M_i \bar{\mathbf R}_i}.
  \end{align}
\end{thm}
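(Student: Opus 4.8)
The plan is to split the error into a \emph{static} part, governed by a purely elliptic problem with the residual as data, and a \emph{dynamic} remainder whose amplification is controlled by the stability factor $S(\omega)$. First I would introduce $\mbf e\in V^h$ as the solution of the elliptic problem $a(\mbf e,\mbf v)=(\mbf R^h(\mbf U^{\mbf m}),\mbf v)$ for all $\mbf v\in V^h$, and set $\mbf E_1=\mbf E-\mbf e$. Combining \eqref{eq:fem_problem} with \eqref{eq:discrete_subspace_residual2} yields the error identity $a(\mbf E,\mbf v)+\iota\omega(\mathcal D\mbf E,\mbf v)-\omega^2(\mbf E,\mbf v)=(\mbf R^h(\mbf U^{\mbf m}),\mbf v)$ for all $\mbf v\in V^h$; subtracting the equation defining $\mbf e$ shows that $\mbf E_1$ solves the full frequency response equation with the lower order right hand side $\mbf v\mapsto\omega^2(\mbf e,\mbf v)-\iota\omega(\mathcal D\mbf e,\mbf v)$. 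The triangle inequality then reduces the theorem to the two bounds $\tn\mbf e\tn\le\sqrt{I_1}$ and $\tn\mbf E_1\tn\le S(\omega)\sqrt{2I_2}$.

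For the static part I would use that the CMS decomposition is $a$-orthogonal, so $\mbf e=\sum_{i=0}^n\mathcal R_i\mbf e$ and $\tn\mbf e\tn^2=\sum_{i=0}^n\tn\mathcal R_i\mbf e\tn^2$. Testing the equation for $\mbf e$ against $V^h_i$ and invoking \eqref{eq:ritz_projection}, \eqref{eq:residual_relationship} and \eqref{eq:Lproperties.1} gives the clean identity $\mathcal L_i\mathcal R_i\mbf e=\mbf R^h_i(\mbf U^{\mbf m})$. Since $\mbf R^h_i(\mbf U^{\mbf m})$ is $L^2$-orthogonal to $V^{h,m_i}_i$, so is $\mathcal R_i\mbf e$, and the approximation property \eqref{eq:approximation_property} with $\alpha=1$ yields $\|\mathcal R_i\mbf e\|\le\Lambda_{i,m_i+1}^{-1}\|\mbf R^h_i(\mbf U^{\mbf m})\|$. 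Then $\tn\mathcal R_i\mbf e\tn^2=(\mathcal L_i\mathcal R_i\mbf e,\mathcal R_i\mbf e)=(\mbf R^h_i(\mbf U^{\mbf m}),\mathcal R_i\mbf e)\le\Lambda_{i,m_i+1}^{-1}\|\mbf R^h_i(\mbf U^{\mbf m})\|^2$ by Cauchy--Schwarz, and summing over $i$ gives $\tn\mbf e\tn\le\sqrt{I_1}$.

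To bound $\|\mbf e\|$ I would exploit that the interior subspaces $V^h_i$, $i\ge1$, have pairwise disjoint supports and are therefore mutually $L^2$-orthogonal, whereas the interface space $V^h_0$ is not orthogonal to them. Writing $\mbf e=\mathcal R_0\mbf e+\sum_{i\ge1}\mathcal R_i\mbf e$ and using $(a+b)^2\le 2a^2+2b^2$ to peel off the interface contribution from the $L^2$-orthogonal interior sum gives $\|\mbf e\|^2\le 2\sum_{i=0}^n\|\mathcal R_i\mbf e\|^2\le 2I_2$, which is the source of the factor $\sqrt2$. For the dynamic part I would pass to the $L^2$-orthonormal global eigenmodes $\{\phi_j\}$ with $a(\phi_j,\mbf v)=\lambda^h_j(\phi_j,\mbf v)$, which simultaneously diagonalize the Rayleigh damping, $(\mathcal D\phi_j,\mbf v)=c_j(\phi_j,\mbf v)$ with $c_j=\alpha\lambda^h_j+\beta$. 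Expanding $\mbf e=\sum_j e_j\phi_j$, the equation for $\mbf E_1$ solves modewise to $E_{1,j}=(\omega^2-\iota\omega c_j)e_j/(\lambda^h_j-\omega^2+\iota\omega c_j)$, whence
\begin{equation}
  \tn\mbf E_1\tn^2=\sum_j\lambda^h_j|E_{1,j}|^2=\sum_j\frac{(\omega^4+\omega^2c_j^2)\lambda^h_j}{(\lambda^h_j-\omega^2)^2+\omega^2c_j^2}\,|e_j|^2\le S(\omega)^2\sum_j|e_j|^2=S(\omega)^2\|\mbf e\|^2,
\end{equation}
by the definition of the stability factor. Hence $\tn\mbf E_1\tn\le S(\omega)\|\mbf e\|\le S(\omega)\sqrt{2I_2}$, and adding the two bounds proves \eqref{thm:aposteriori_error_estimate.2}; the matrix forms of $I_1,I_2$ are just the coordinate version of $\|\mbf R^h_i(\mbf U^{\mbf m})\|^2=\bar{\mathbf R}_i^T\mathbf M_i\bar{\mathbf R}_i$.

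The step I expect to require the most care is the dynamic estimate: one must verify that the static/dynamic splitting genuinely isolates the indefiniteness, so that $\mbf E_1$ is driven only by the lower order data $(\omega^2-\iota\omega\mathcal D)\mbf e$, and that the modewise amplification is bounded \emph{uniformly in $j$} by exactly $S(\omega)^2$ rather than by some merely $\omega$-dependent constant. A secondary subtlety, easy to overlook, is that the factor $\sqrt2$ hinges on distinguishing the mutually $L^2$-orthogonal interior subspaces from the non-orthogonal interface space; treating all subspaces symmetrically in the $L^2$ estimate for $\|\mbf e\|$ would spoil the constant.
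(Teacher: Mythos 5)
Your proof is correct, but it takes a genuinely different route from the paper's. The paper argues by duality: it poses the dual problem \eqref{eq:discrete_dual_problem} with data $\mbf\psi=\mathcal L\mbf E$ (so that $H(\mbf E)=\tn\mbf E\tn^2$), splits the \emph{dual} solution $\mbf\Phi=\mbf\Phi_0+\mbf\Phi_1$ into an elliptic part and a dynamic correction, subtracts $\mathcal P^{\mbf m}\mbf\Phi$ via Galerkin orthogonality so that \eqref{eq:approximation_property} can be applied to $\mathcal R_i\mbf\Phi_k-\mathcal P_i^{m_i}\mathcal R_i\mbf\Phi_k$, and finally divides by $\tn\mbf E\tn$ after showing $\tn\mbf\Phi_0\tn=\tn\mbf E\tn$ and $\|\mathcal L\mbf\Phi_1\|\le S(\omega)\tn\mbf\Phi_0\tn$. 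You instead split the \emph{primal} error $\mbf E=\mbf e+\mbf E_1$ into an elliptic reconstruction driven by the residual and a dynamic remainder, and bound each piece directly; the $L^2$-orthogonality of $\mbf R^h_i(\mbf U^{\mbf m})$ to $V^{h,m_i}_i$ (Galerkin orthogonality in disguise) lets you apply \eqref{eq:approximation_property} to $\mathcal R_i\mbf e$ itself via the identity $\mathcal L_i\mathcal R_i\mbf e=\mbf R^h_i(\mbf U^{\mbf m})$. The two arguments are structural mirror images: your bound $\|\mbf e\|^2\le2\sum_i\|\mathcal R_i\mbf e\|^2$ extracts the factor $\sqrt2$ from the same interface/interior orthogonality that the paper invokes for $\sum_i\|\mathcal L_i\mathcal R_i\mbf\Phi_1\|^2\le2\|\mathcal L\mbf\Phi_1\|^2$, and your modewise computation of $\tn\mbf E_1\tn\le S(\omega)\|\mbf e\|$ is exactly the Fourier-expansion argument the paper applies to $\mbf\Phi_1$, with the same net single power of $\lambda_j^h$ in the supremum (and, like the paper's proof, it confirms that the $(\lambda_j^h-\omega)^2$ in the stated $S(\omega)$ should read $(\lambda_j^h-\omega^2)^2$). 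What your route buys is economy: no dual problem, no choice of $\mbf\psi$, no division by $\tn\mbf E\tn$. What the paper's route buys is that the identical dual-splitting machinery also delivers the goal-quantity estimate of Section 4.2 for an arbitrary output functional $H$, of which the energy-norm bound is the special case $\mbf\psi=\mathcal L\mbf E$.
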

\begin{proof}
As in \cite{jaklar2011} we split the dual solution $\mbf{\Phi}$ into
two parts $\mbf{\Phi}=\mbf{\Phi}_0 + \mbf{\Phi}_1$. Here
$\mbf{\Phi}_0$ satisfies
\begin{align}\label{eq:dual_split1}
  a(\mbf{v}, \mbf{\Phi}_0)=(\mbf{v}, \mbf{\psi}), \quad \forall \mbf{v} \in V^h,
\end{align}
and $\mbf{\Phi}_1$ satisfies
\begin{align}\label{eq:dual_split2}
  a(\mbf{v}, \mbf{\Phi}_1) + \iota\omega(\mathcal D\mbf v, \mbf \Phi_1) -\omega^2(\mbf v, \mbf \Phi_1)
  =\omega^2(\mbf{v}, \mbf{\Phi}_0) - \iota\omega(\mathcal D\mbf v, \mbf \Phi_0), \quad \forall \mbf{v} \in V^h.
\end{align}

Introducing this dual split in \eqref{eq:error_representation}, using
the Galerkin orthogonality property \eqref{eq:galerkin_orthogonality}
to subtract $\mathcal P \mbf \Phi$, we obtain
\begin{align}\label{eq:error_representation_split}
  H(\mbf E)&=a(\mbf E, \mathcal R \mbf \Phi_0 - \mathcal P  \mbf \Phi_0)
  + \iota\omega(\mathcal D\mbf E, \mathcal R\mbf \Phi_0 - \mathcal P \mbf \Phi_0) \\
  &\qquad - \omega^2(\mbf E,
  \mathcal R\mbf \Phi_0 - \mathcal P \mbf \Phi_0) \\
  &\qquad +a(\mbf E, \mathcal R\mbf \Phi_1 - \mathcal P \mbf \Phi_1) + \iota\omega(\mathcal D \mbf E, \mathcal R\mbf \Phi_1 - \mathcal P \mbf \Phi_1) \\
  &\qquad - \omega^2(\mbf E,
  \mathcal R \mbf \Phi_1 - \mathcal P \mbf \Phi_1) \\
  &= \sum_{i=0}^n (\mbf R^h_i(\mbf U^{\mbf m}), \mathcal R_i\mbf \Phi_0 - \mathcal P_i \mathcal R_i \mbf \Phi_0)\\
  &\qquad + \sum_{i=0}^n (\mbf R^h_i(\mbf U^{\mbf m}), \mathcal R_i\mbf \Phi_1 - \mathcal P_i \mathcal R_i \mbf \Phi_1) \\
  &\leq \sum_{i=0}^n \| \mbf R^h_i(\mbf U^{\mbf m})\| \| \mathcal R_i\mbf \Phi_0 - \mathcal P_i \mathcal R_i \mbf \Phi_0 \|\\
  &\qquad + \sum_{i=0}^n  \| \mbf R^h_i(\mbf U^{\mbf m})  \|  \| \mathcal R_i\mbf \Phi_1 - \mathcal P_i \mathcal R_i \mbf \Phi_1 \|.
\end{align}

  We choose $\mbf \psi=\mathcal{L}\mbf E$ in the dual problem \eqref{eq:discrete_dual_problem}, which gives $(\mbf E, \mbf \psi)=\tn \mbf E \tn^2$. 
  Using property \eqref{eq:approximation_property} in Lemma 1 together with the Cauchy-Schwarz inequality on the sums in \eqref{eq:error_representation_split}, we then have
  \begin{align}
    \tn \mbf E \tn^2 &\leq \Biggl ( \sum_{i=0}^n \frac{1}{\Lambda_{i,m_i+1}} \| \mbf{R}^h_i (\mbf{U}^{\mbf{m}})\|^2 \Biggl)^{1/2}  \Biggl (\sum_{i=0}^n  \tn \mathcal{R}_i \mbf{\Phi}_0 \tn^2 \Biggl )^{1/2} \\
    &\qquad + \Biggl ( \sum_{i=0}^n \frac{1}{\Lambda_{i,m_i+1}^2} \| \mbf{R}^h_i (\mbf{U}^{\mbf{m}})\|^2 \Biggl)^{1/2}  \Biggl (\sum_{i=0}^n  \| \mathcal{L}_i \mathcal{R}_i \mbf{\Phi}_1 \|^2 \Biggl )^{1/2}. \notag
  \end{align}
  Using that the decomposition of $V^h=\bigoplus_{i=0}^n V^h_i$ is
  $a$-orthogonal, we have
  \begin{align}
    \sum_{i=0}^n \tn \mathcal{R}_i \mbf \Phi_0 \tn^2 = \tn \mbf \Phi_0 \tn^2,
  \end{align}
  from Parseval's identity.
  It furthermore holds that
  \begin{align}
    \sum_{i=0}^n \| \mathcal{L}_i \mathcal{R}_i \mbf \Phi_1 \|^2
    &\leq 2 \|\mathcal{L} \mbf \Phi_1 \|^2,
  \end{align}
  cf. e.g. \cite{jakbenlar2010}.
  
  Combining the above results, we arrive at
  \begin{align}
    \tn \mbf{E} \tn^2 &\leq \biggl( \sum_{i=0}^n \frac{1}{\Lambda_{i,m_i+1}} \| \mbf{R}^h_i (\mbf{U}^{\mbf{m}})\|^2 \biggr)^{1/2} \tn \mbf \Phi_0 \tn \\
    &\qquad + \sqrt{2}\biggl( \sum_{i=0}^n \frac{1}{\Lambda_{i,m_i+1}^2} \| \mbf{R}^h_i (\mbf{U}^{\mbf{m}})\|^2 \biggr)^{1/2} 
    \| \mathcal L \mbf \Phi_1 \|.
  \end{align}
  We now turn to the question of stability of the dual solutions $\mbf \Phi_k$, $k=0, 1$.
  Beginning with $\mbf \Phi_0$ in \eqref{eq:dual_split1} we have
    \begin{align}
      a(\mbf v, \mbf \Phi_0 ) &= (\mbf v, \mathcal{L}\mbf{E}) \\
      &=  a(\mbf v, \mbf{E}), \quad \forall \mbf v \in V^h,
    \end{align}
    and hence $\tn \mbf \Phi_0 \tn = \tn \mbf E \tn$.  Next, the
    solution $\mbf \Phi_1$ to \eqref{eq:dual_split2} is given by the
    Fourier expansion
    \begin{align}\label{eq:dual_expansion}
      \mbf \Phi_1=  \sum_{j=1}^N \frac{(\omega^2 - \iota\omega c_j)(\mbf Z_j, \mbf \Phi_0)}{\lambda^h_j - \omega^2 + \iota\omega c_j}\mbf Z_j,
    \end{align}
    where $\mbf Z_j$, $j=1, \ldots, N$, is a basis of elastic
    eigenmodes in $V^h$, and $c_j=\alpha \lambda^h_j + \beta$.  Using
    that $(\mathcal L^\alpha \mbf Z, \mbf v)=\lambda^\alpha(\mbf Z,
    \mbf v)$, for $\mbf v \in V^h$, $\alpha \geq 0$, we then
    have
    \begin{align}
      \| \mathcal L \mbf \Phi_1 \|^2 
      &= \sum_j^N \frac{(\omega^2 -
        \iota\omega c_j)(\omega^2 + \iota\omega c_j)|(\mbf Z_j, \mathcal L
        \mbf \Phi_0)|^2} {(\lambda^h_j - \omega^2 + \iota\omega
        c_j)(\lambda^h_j-\omega^2-\iota\omega c_j)} \\ 
      &= \sum_j^N \frac{(\omega^4  + \omega^2 c_j^2)|(\mbf Z_j, \mathcal L \mbf \Phi_0)|^2} {(\lambda^h_j -
        \omega^2)^2 + \omega^2 c_j^2} \\ 
      &\leq \sup_j \frac{(\omega^4 + \omega^2 c_j^2)
        \lambda^h_j}{(\lambda^h_j - \omega^2)^2 + \omega^2 c_j^2} \tn
      \mbf \Phi_0 \tn^2,
    \end{align}
    and this completes the proof.
\end{proof}

\begin{rem}

In Theorem \ref{thm:aposteriori_error_estimate}, we may estimate further
using Young's inequality, to obtain the subspace indicators
\begin{align}\label{eq:indicators2}
  \eta_{a, i}&=\frac{2}{\Lambda_{i,m_{i+1}}}\|\mbf{R}^h_i(\mbf{U}^{\mbf{m}})\|^2 
  + \frac{4 S^2(\omega)}{\Lambda^2_{i,m_{i+1}}}\|\mbf{R}^h_i(\mbf{U}^{\mbf{m}})\|^2,
  \quad i=0, \ldots, n, 
\end{align}
which reflect to what degree modal truncation in each subspace influence
the energy norm of the error in the reduced solution.

Using these indicators we may design adaptive algorithms that
automatically determines suitable refinement levels in the individual
subspaces.  Such algorithms are outlined in Section
\ref{sec:numerical_examples}, along with the different numerical
examples.
\end{rem}
\begin{rem}
  The matrix form of equation \eqref{eq:discrete_subspace_residual2} for the subspace
  residual $\mbf R^h_i(\mbf U^{\mbf m})$ reads
  \begin{align}
    \mathbf V_i^T\mathbf M \mathbf V_i \bar{\mathbf R}_i=\mathbf V_i^T(\mathbf b - \mathbf K \bar{\mathbf U}^{\mbf m} - \iota \mathbf D \bar{\mathbf U}^{\mbf m} + \omega^2 \mathbf M \bar{\mathbf U}^{\mbf m}).
  \end{align}
  In the case of the modal basis, $\mathbf V_i^T \mathbf M \mathbf V_i
  = \mathbf I$, and
  \begin{align}
    \|\mbf R^h_i(\mbf U^{\mbf m})\|^2&=\bar{\mathbf R}_i^T\bar{\mathbf R}_i \\
    &=(\mathbf b - \mathbf K \bar{\mathbf U}^{\mbf m} - \iota \mathbf D \bar{\mathbf U}^{\mbf m} + \omega^2 \mathbf M \bar{\mathbf U}^{\mbf m})^T\mathbf V_i  \\
    &\qquad \times \mathbf V_i^T(\mathbf b - \mathbf K \bar{\mathbf U}^{\mbf m} - \iota \mathbf D \bar{\mathbf U}^{\mbf m} + \omega^2 \mathbf M \bar{\mathbf U}^{\mbf m}) \notag \\
    &=(\mathbf b - \mathbf K \bar{\mathbf U}^{\mbf m} - \iota \mathbf D \bar{\mathbf U}^{\mbf m} + \omega^2 \mathbf M \bar{\mathbf U}^{\mbf m})^T\mathbf W_i  \\
    &\qquad \times \mathbf W_i^T(\mathbf b - \mathbf K \bar{\mathbf U}^{\mbf m} - \iota \mathbf D \bar{\mathbf U}^{\mbf m} + \omega^2 \mathbf M \bar{\mathbf U}^{\mbf m}) \notag.
  \end{align}
  The subspace residuals may similarly to the Ritz projections of the
  dual solution be approximately computed. For instance using the
  subspaces $V^{h,d_i} \setminus V^{h,m_i}$, $i=0, \ldots, n$, or mass
  lumping techniques.
\end{rem}
\begin{rem}
In practical application we cannot evaluate $S(\omega)$ exactly as it
depends on the finite element eigenvalues which in general are unknown
and an approximation of $S(\omega)$ must be used. Such an
approximation may be obtained using a reduced model of the eigenvalue
problem.
\end{rem}

\section{Numerical Examples}\label{sec:numerical_examples}
In this section we apply the above developed theory in three numerical
examples. In the first example we describe and apply an adaptive
algorithm based on the estimate \eqref{eq:general_estimate} in a
single load case; in the second example we describe and apply an
algorithm based on \eqref{thm:aposteriori_error_estimate.2} in a
single load case: and in the third example we describe and apply an
adaptive algorithm based on \eqref{thm:aposteriori_error_estimate.2}
for the computation of a series of responses when $\omega$ varies over
a given range.

In all three examples we consider the domain $\Omega$ seen in Figure
\ref{fig:partition}. The domain is partitioned into subdomains
$\Omega_i$, $i=1, \ldots, 6$, interfacing at $\Gamma$. We assume that
the boundary is clamped at $x=0$ and stress free elsewhere. The
reference finite element model is piecewise linear, defined on a
triangular mesh containing approximately 7000 elements. The material
constants are $E=\rho=1$ and $\nu=0.29$, and the parameters $\alpha$
and $\beta$ in the Rayleigh damping are chosen as
$\alpha=\beta=0.025$.

\paragraph{Example 1.}
We consider the single load case $(\omega, \mbf f, \mbf g_N)$ where
$\omega=1$, $\mbf f=\mbf 0$, and $\mbf g_N=[0, -\exp(-100\lvert \mbf x-
  \mbf x_0 \rvert^2)]$, with $\mbf x_0=(0.7, 0.5)$. We assume that the
goal of the computation is to control the absolute error in the
functional $H(\cdot)=(\cdot, \pi \mbf \psi)$, where $\mbf
\psi=[\exp(-100\lvert \mbf x - \mbf x_1 \rvert^2), 0]$, and $\pi$ is
the nodal interpolation operator on $V^h$. This roughly amounts to
accurately computing the displacements in the $x$ direction near the
point $\mbf x_1=(0.95, 0.25)$. We use an adaptive algorithm of the
form outlined in Algorithm \ref{alg:adaptive_algorithm1}.
\begin{algorithm}[!t]
  \caption{}\label{alg:adaptive_algorithm1} 
  \begin{algorithmic}[1]
    \medskip
    \STATE Start with a guess of the subspace dimensions $\mbf m$.
    \STATE Solve the problem \eqref{eq:reduced_problem} for
    the displacements $\mbf U^{\mbf m}$.  \STATE Solve the dual
    problem \eqref{eq:discrete_dual_problem} for $\mbf \Phi$.  
    \STATE
     For each subspace $V^{h,m_i}_{i}$, compute the error indicators $\eta_{J,i}$ defined in
    \eqref{eq:indicators1}, and use them together with a refinement
    strategy, see Remark \ref{rem:refinement_rule}, to decide which subspaces are eligible for
    refinement and how much those subspaces should be refined. Refine those subspaces accordingly.
    \STATE Repeat steps 2--4
    until satisfactory results have been obtained.
  \end{algorithmic}
\end{algorithm}
\begin{rem}\label{rem:refinement_rule}
We use the following adaptive strategy: compute the
normalized subspace indicators $\tilde{\eta}_i$ by
$\tilde{\eta}_i=\eta_i/\sum \eta_i$. With the objective of adding a
maximum of NMODES eigenmodes in a maximum of NITS iterations, add
\begin{align}\label{eq:refinement_rule}
  l_i=\lfloor \tilde{\eta}_i \times \mathrm{NMODES}/\mathrm{NITS} \rfloor
\end{align}
modes in subspace $i=0,\ldots, n$, each iteration.
\end{rem}
Choosing the parameters $\mathrm{NMODES}=200$ and $\mathrm{NITS=10}$
in the adaptive strategy \eqref{eq:refinement_rule}, the objective of
the computation may be viewed as computing the output in $H(\cdot)$ as
accurately as possible using approximately 200 DOF distributed over
the course of 10 iterations. Using such an objective is motivated by
considerations regarding available precomputed basis functions and
computational resources. 

We start the algorithm with the subspace dimensions $m_i=1$. Each
iteration we compute an approximate dual solution $\hat{\mbf\Phi}$
using $m_i+10$ eigenmodes in each dual subspace basis.

In Table \ref{tab:subspace_dimensions1_approx} we see the subspace
dimensions evolving as the adaptive algorithm proceeds, and in Figures
\ref{fig:error1_approx}, we see the corresponding absolute error
$\lvert H(\mbf E) \rvert$ together with the estimated error
 as the adaptation proceeds.

For comparison we have also run the algorithm using an exact dual
solution $\mbf \Phi$. The resulting subspace dimensions are displayed
in Table \ref{tab:subspace_dimensions1_exact}, and the error and
estimate can be seen in Figure \ref{fig:error1_exact}. We see that the
adaptation is similar in both cases, and that the estimate with
approximate dual is accurate, although a slight underestimate is
introduced after the fourth iteration. The underestimation may be
alleviated by refining the dual more aggressively during adaptation.

\begin{figure}[!t]
  \begin{center}
    \scalebox{0.55}{\includegraphics{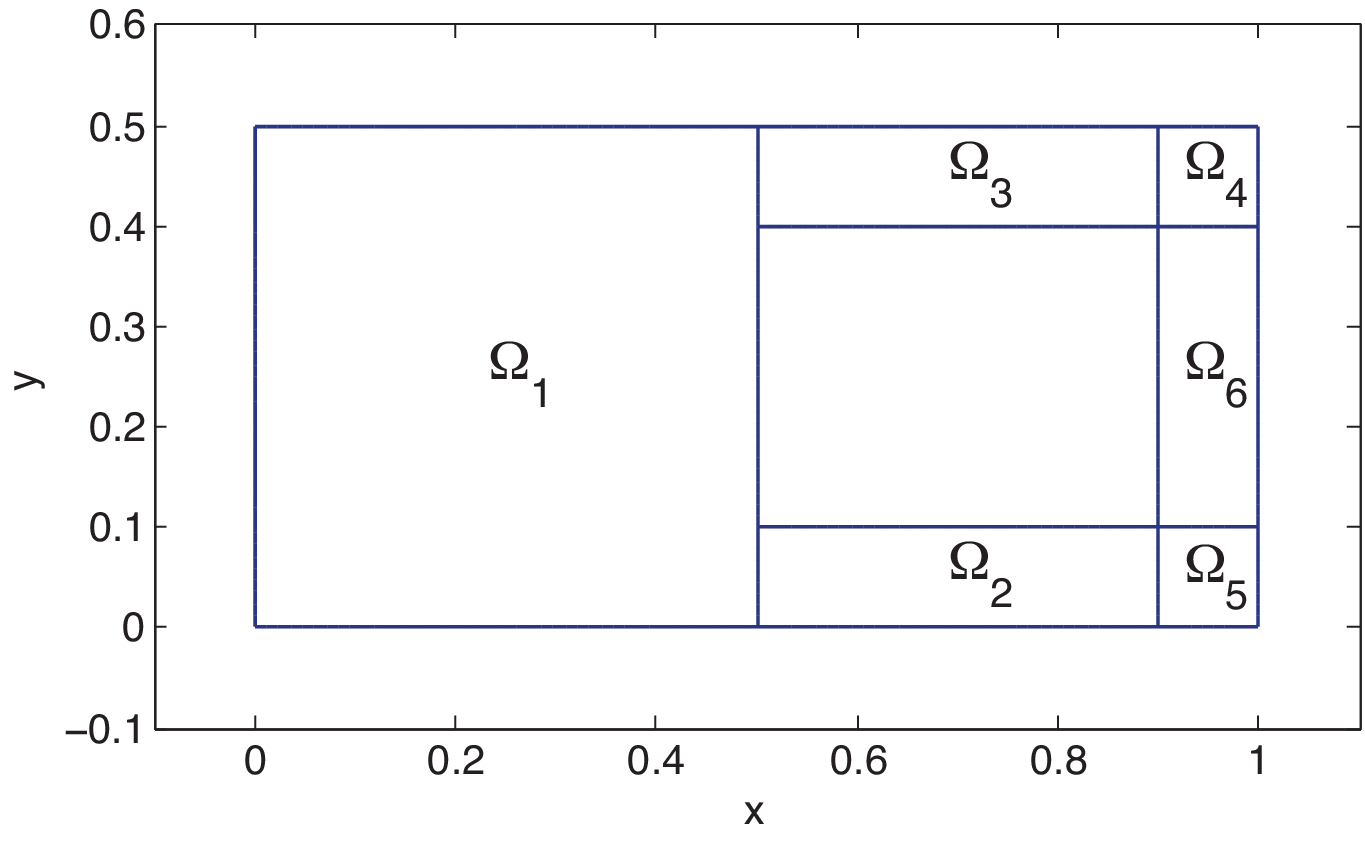}}
    \caption{The domain $\Omega$ partitioned into subdomains $\Omega_i$ interfacing at $\Gamma$.}\label{fig:partition}
  \end{center}
\end{figure}

\begin{figure}[!ht]
   \begin{center}
    \scalebox{0.55}{\includegraphics{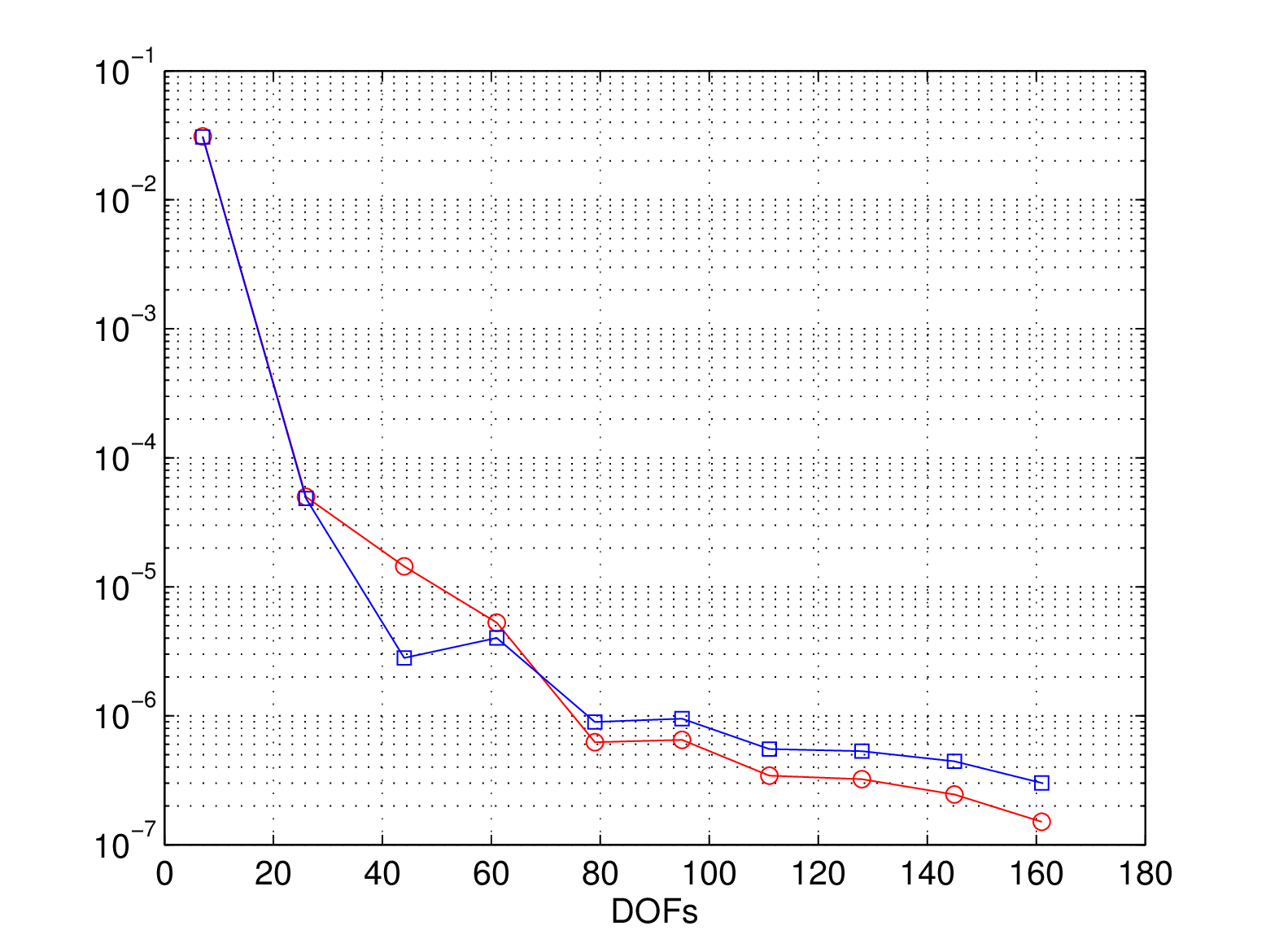}}
     \caption{Semilog plot of the absolute functional error and the
       estimate computed using an approximate dual solution vs. the
       number of DOFs as the adaptive algorithm
       proceeds in Example 1. Legend: square, error; circle, estimate.}\label{fig:error1_approx}
   \end{center}
\end{figure}
\begin{figure}[!ht]
   \begin{center}
     \scalebox{0.55}{\includegraphics{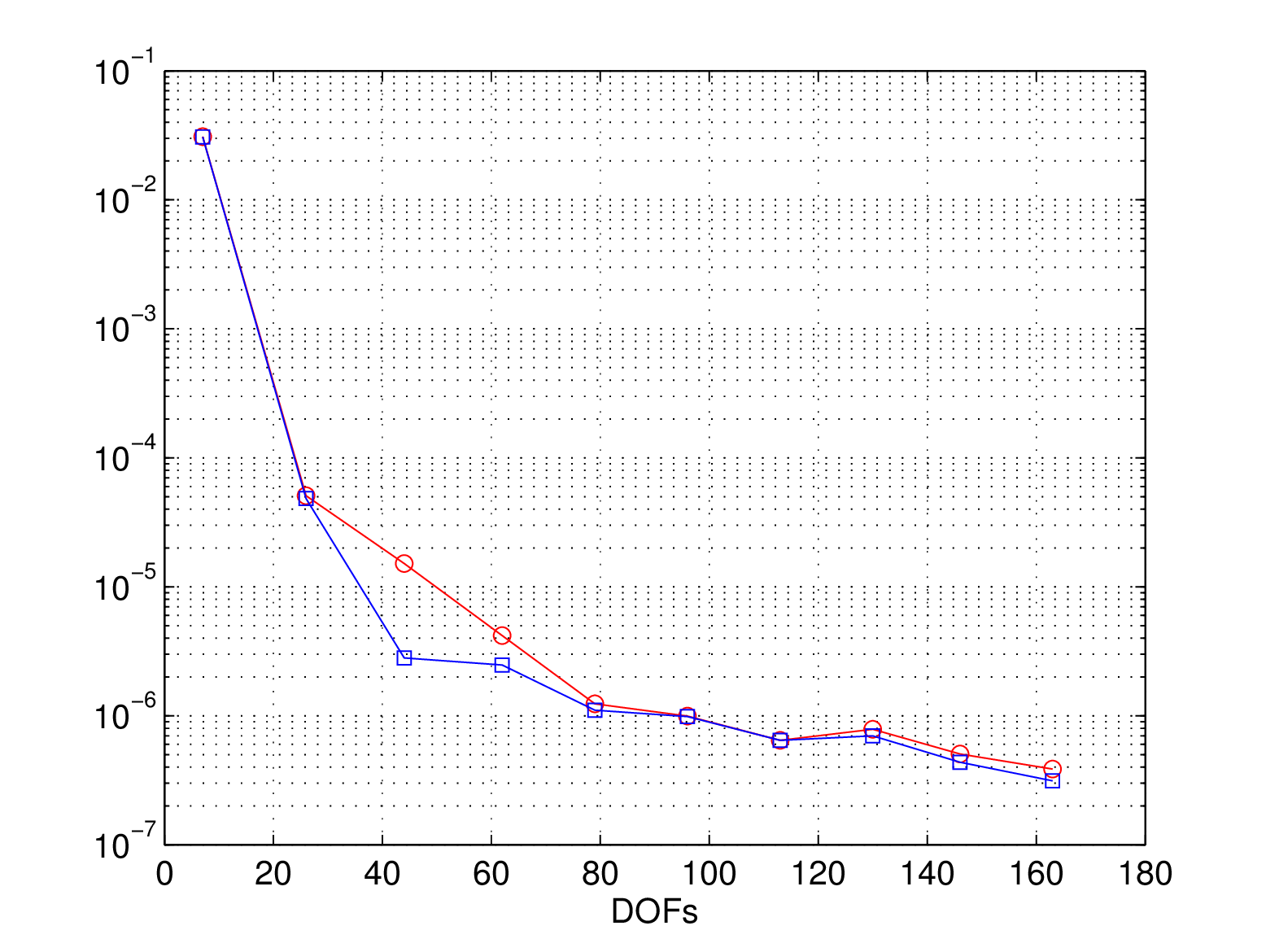}}
     \caption{Semilog plot of the absolute functional error and the
       estimate computed using the exact dual solution vs. the number
       of DOFs, as the adaptive algorithm proceeds in Example
       1. Legend: square, error;
       circle, estimate.}\label{fig:error1_exact}
   \end{center}
\end{figure}
\begin{table}[!ht]
  \centering
  \begin{tabular}{|c|c|c|c|c|c|c|c|}
    \hline
    iter. & $m_0$ & $m_1$ & $m_2$ & $m_3$ & $m_4$ & $m_5$& $m_6$ \\
    \hline
    \hline
    2  &  20  &    1  &    1  &    1  &    1  &    1  &    1  \\
    4  &  20  &    5  &   13  &   10  &    2  &    2  &    9  \\
    6  &  26  &   19  &   14  &   14  &    5  &    8  &    9  \\
    8  &  37  &   29  &   18  &   15  &    9  &   10  &   10  \\
    10 &  41  &   36  &   23  &   15  &   12  &   12  &   22  \\
    \hline
  \end{tabular}\caption{Iteration number and subspace dimensions 
    as the adaptation proceeds in Example 1 using an approximate dual
    solution.}\label{tab:subspace_dimensions1_approx}
\end{table}
\begin{table}[!ht]
  \centering
  \begin{tabular}{|c|c|c|c|c|c|c|c|}
    \hline
    iter. & $m_0$ & $m_1$ & $m_2$ & $m_3$ & $m_4$ & $m_5$& $m_6$ \\
    \hline
    \hline
    2  &  20  &   1   &   1   &   1   &   1   &   1   &   1 \\
    4  &  20  &   6   &  13   &  10   &   2   &   2   &   9 \\
    6  &  26  &  20   &  15   &  14   &   3   &   9   &   9 \\
    8  &  33  &  30   &  20   &  16   &   8   &   9   &  14 \\
    10 &  35  &  39   &  24   &  24   &   8   &  10   &  23 \\
    \hline
  \end{tabular}\caption{Iteration number and subspace dimensions  
    as the adaptation proceeds in Example 1 using an exact dual
    solution.}\label{tab:subspace_dimensions1_exact}
\end{table}
\paragraph{Example 2.}
Next, we consider the load case $(\omega, \mbf f, \mbf g_N)$, with
$\omega=\sqrt{3/2}$, $\mbf f=\mbf 0$, and $\mbf g_N=[\exp(-100\lvert
  \mbf x- \mbf x_0 \rvert^2), 0]$, with $\mbf x_0=(0.9, 0.25)$. In
this example we aim to control the energy norm of the error $\tn \mbf
E \tn$ as efficiently we can. We use an adaptive algorithm of the form
outlined in Algorithm \ref{alg:adaptive_algorithm2}.  Again we
use the adaptive strategy \eqref{eq:refinement_rule} with the
parameters $\mathrm{NMODES}=200$ and $\mathrm{NITS=10}$, and we start
the algorithm with subspace dimensions $m_i=1$. The stability factor
$S(\omega)$ is approximated using the eigenvalues from the
reduced eigenvalue problem: find $(\lambda^{\mbf m}, \mbf Z^{\mbf
  m})$, such that
\begin{align}
  a(\mbf Z^{\mbf m}, \mbf v)=\lambda^{\mbf m}(\mbf Z^{\mbf m}, \mbf
  v), \quad \forall \mbf v \in V^{h, \mbf
    m}.\label{eq:reduced_eigenvalue_problem}
\end{align}

We remark that although the size of the stability factor is not
crucial for the guiding of the adaptive algorithm in this example,
having a reasonable estimate is however important for quantitative
error estimation.

In Table \ref{tab:subspace_dimensions2_approx} we see the subspace
dimensions as the adaptation proceeds, and the corresponding error and
estimate can be seen in Figure \ref{fig:error2}. From the table
we see that the subspaces $V^{h, m_i}_i$ are refined symmetrically as
should be expected from the given load. We see in the figure that the
estimate provides an accurate bound on the error.

\begin{algorithm}[!t]
  \caption{}\label{alg:adaptive_algorithm2} 
  \begin{algorithmic}[1]
    \medskip
    \STATE Start with a guess of the subspace dimensions $\mbf m$.
    \STATE Solve the problem \eqref{eq:reduced_problem} for
    the displacements $\mbf U^{\mbf m}$.  
    \STATE  For each subspace $V^{h,m_i}_{i}$, compute the error
    indicators $\eta_{a,i}$ defined in \eqref{eq:indicators2},
    and use them together with a refinement
    strategy, see Remark
    \ref{rem:refinement_rule}, to decide which subspaces are eligible for
    refinement and how much those subspaces should be refined. Refine those subspaces accordingly.
    \STATE Repeat steps 2--3 until
    satisfactory results have been obtained.
  \end{algorithmic}
\end{algorithm}

\begin{figure}[!ht]
  \begin{center}
    \scalebox{0.55}{\includegraphics{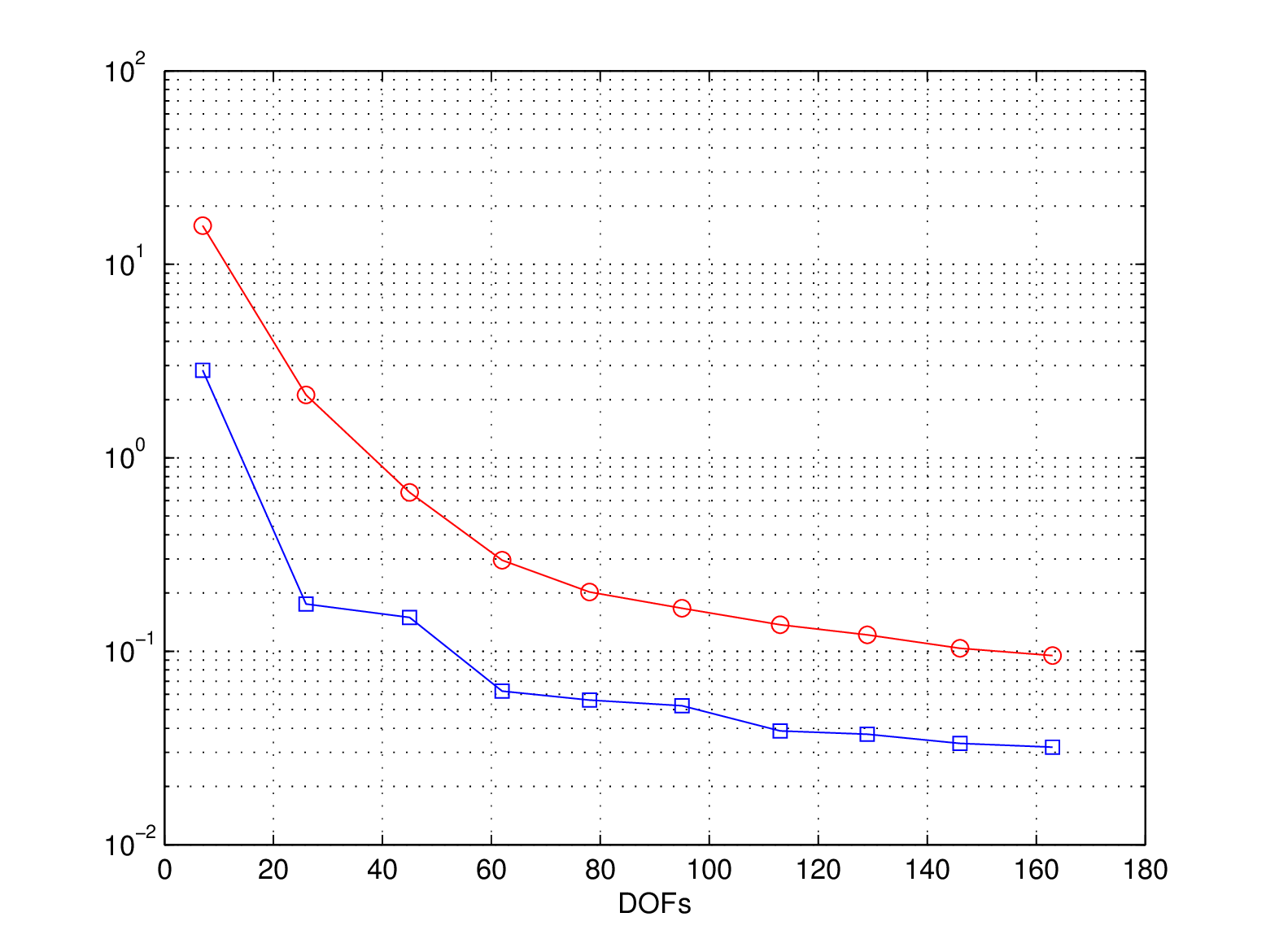}}
     \caption{Semilog plot of the energy norm of the error and the
       estimate computed using the exact stability factor vs. the
       number of DOFs, as the adaptive algorithm proceeds in Example
       2. Legend: square, error; circle, estimate.}\label{fig:error2}
  \end{center}
\end{figure}

\begin{table}[!ht]
  \centering
  \begin{tabular}{|c|c|c|c|c|c|c|c|}
    \hline
    iter. & $m_0$ & $m_1$ & $m_2$ & $m_3$ & $m_4$ & $m_5$& $m_6$ \\
    \hline
    \hline
    2  & 20  &    1   &   1   &   1  &    1   &   1  &    1  \\
    4  & 20  &    5   &   8   &   8  &    1   &   1  &   19  \\
    6  & 20  &   11   &   9   &   9  &    1   &   1  &   44  \\
    8  & 20  &   15   &  11   &  11  &    2   &   2  &   68  \\
    10 & 20  &   18   &  13   &  13  &    3   &   3  &   93  \\
    \hline
  \end{tabular}\caption{Iteration number and subspace dimensions 
    as the adaptation proceeds in Example
    2.}\label{tab:subspace_dimensions2_approx}
\end{table}

\paragraph{Example 3.}
In this example we're interested in computing the frequency response
for a set $\{(\omega_k, \mbf f_k, \mbf g_{N,k})\}$ of load cases. We
let the goal of the computation be to control the relative error
measured in energy norm $\tn \mbf E \tn/\tn \mbf U \tn$ for each load
case, and we assume that the following estimate
  \begin{align}
    \frac{\tn \mbf E \tn}{\tn \mbf U \tn} \lesssim \frac{1}{\tn \mbf U^{\mbf m} \tn}\Bigl(\sqrt{I_1} + S(\omega)\sqrt{2I_2}\Bigr),
  \end{align}
holds approximately.

An adaptive algorithm designed to handle this setting is outlined in
Algorithm \ref{alg:adaptive_algorithm3}. The algorithm utilizes that
if a basis has been adaptively constructed for a given $\omega \in
\mathbb R$, that basis is likely well suited for the case $\omega +
\varepsilon$ as well, when $\varepsilon$ is small and that the load
pattern has not changed significantly. The algorithm refines the
subspaces $V^{h, m_i}_{i}$ contributing to the error and coarsens
the subspaces that do not in order to keep the dimension of the
reduced subspace as small as it can.
\begin{algorithm}[H]
  \caption{}\label{alg:adaptive_algorithm3} 
  \begin{algorithmic}[1]
    \medskip
    \STATE For each load case $(\omega_k, \mbf f_k, \mbf g_{N,k})$ in a given set.
    \STATE Start with a guess of the subspace dimensions $\mbf m_k$.
    \STATE Solve the
    problem \eqref{eq:reduced_problem} for the displacements
    $\mbf U^{\mbf m_k}_k$.  
    \STATE For each subspace $V^{h,m_i}_{i}$, compute the error indicators
    $\eta_{a,i}$ defined in \eqref{eq:indicators2}, and use them
    together with a refinement strategy, see Remark
    \ref{rem:refinement_strategy3}, to decide which subspaces are
    eligible for refinement/coarsening and how much those subspaces
    should be refined/coarsened. Refine/coarsen those subspaces accordingly.
    \STATE Repeat steps 3--4 until satisfactory results have been
    obtained.  
    \STATE Let the resulting subspace dimensions be the
    starting guess for the next load case.
  \end{algorithmic}
\end{algorithm}
\begin{rem}\label{rem:refinement_strategy3}
  In Algorithm 3 we use a refinement strategy based on the following
  reasoning: begin by choosing a tolerance TOL, and let the objective
  for each load case be to refine the model such that the estimated
  relative error is the same as this tolerance, that is
  $\sqrt{\sum{\eta_{a,i}}}/\tn \mbf U^{\mbf m} \tn \approx
  \mathrm{TOL}$. Squaring both sides, we obtain
  \begin{align}
    \frac{\sum \eta_{a,i}}{\tn \mbf U^{\mbf m} \tn^2} \approx \mathrm{TOL}^2.
  \end{align}
  Assuming that each subspace should contribute equally to the
  error, so that
  \begin{align}
    \eta_{a,i} \approx \frac{1}{n}\sum \eta_{a,i},\label{eq:average_indicator}
  \end{align}
  we have that each indicator $\eta_{a,i}$ should fulfill
  \begin{align}
    \frac{\eta_{a,i}}{\tn \mbf U^{\mbf m} \tn^2} \approx \frac{\mathrm{TOL}^2}{n}.
  \end{align}
  By studying the difference
  \begin{align}
    \tau_{a,i}=\frac{\eta_{a,i}}{\tn \mbf U^{\mbf m} \tn^2}-\frac{\mathrm{TOL}^2}{n}, \quad i=0, \ldots, n,
  \end{align}
  we obtain a subspace indicator $\tau_{a,i}$, that is positive if
  refinement is required and negative if coarsening is required. By
  normalizing each indicator, we obtain a rough measure of how much
  each subspace should be refined or coarsened for
  \eqref{eq:average_indicator} to hold true.

  Hence, let $C=1/\sum \lvert \tau_{a,i} \rvert$, and choose $A_i, R_i
  \in \mathbb N \leq M_i \in \mathbb N$, where $M_i$ is the number of
  precomputed eigenmodes for the $i$th subspace. Then, if $\tau_{a,i}
  > 0$, add $\lfloor C\tau_{a,i} A_i \rfloor$ consecutive eigenmodes
  are to the $i$th basis subject to $\dim V^{h,m_i}_i \leq M_i$, and
  if $\tau_{a,i} < 0$, remove the last $\lfloor C\tau_{a,i} R_i
  \rfloor$ modes from the $i$th basis, subject to $V^{h,m_i}_i \geq
  1$.

  Further, if for some load case $(\omega_k, \mbf f_k, \mbf g_{N,k})$
  and subspace $V^{h,m_j}_j$ it holds that $\dim \mathcal V^{h,m_j}_j
  = M_j$, and $0 < \tau_{a,i} < \tau_{a,j}$, $i\neq j$, and
  $\sqrt{\sum{\eta_{a,i}}}/\tn \mbf U^{\mbf m} \tn > \mathrm{TOL}$, we
  consider the load case non resolvable given the current tolerance
  and maximum number of modes $M_i$, and we let the algorithm
  continues to the next load case.
\end{rem}

We let the load cases in the example be defined by $\omega^2_k=0.1,
0.2, \ldots, 10$, with $\mbf f$, and $\mbf g_N$ constant, chosen as in
Example 1. The parameters in the adaptive strategy outlined in Remark
\ref{rem:refinement_strategy3} are chosen as $\mathrm{TOL}=0.1$,
$A_i=M_i/10$, and $R_i=M_i/10$, where the number $M_i$ of precomputed
eigenmodes are $M_0=116$, $M_i=220$, $i=1, \ldots, 7$. As in Example
2, we solve the reduced eigenvalue problem
\eqref{eq:reduced_eigenvalue_problem} for a sufficiently large set of
eigenvalues needed to approximate the stability factor $S(\omega)$.

We start the algorithm with the subspace dimensions $m_i=1$. The
algorithm terminates after requiring a total of 59 refinement
iterations in order to satisfy the error tolerance in each of the 30
load cases.

In Figure \ref{fig:ex3_error_indicator} we have plotted the energy
norm of the solutions, relative errors, estimated relative errors, and
stability factors, for each of the computed load cases.  We see that
overall the error is estimated to a high degree of accuracy, and we
see that the estimate is close to the desired tolerance TOL=0.1, in
every load case. The stability factor is large near $\omega^2=2.0$,
due to proximity of eigenvalues at $\omega^2=1.6473$ and
$\omega^2=1.9812$. Since the norm of the solution increases when
approaching these values, the relative error decreases, however,
leading to less accuracy in the estimated error. This reflects the
fact that error estimation is more difficult near resonance
frequencies.

In Table \ref{tab:load_case_data} we have displayed the obtained
subspace dimensions and total dimension, the number of required
iterations, and the efficiency index $\mathrm{EI}=\sqrt{\sum \eta_{a,i}}/\tn
\mbf E \tn$, for each load case in the range $0.1 \leq \omega^2 \leq
3.0$. We see that typically only one or two iterations is required for
each load case, except in a few cases.
\begin{figure}[!t]
  \begin{center}
    \scalebox{0.55}{\includegraphics{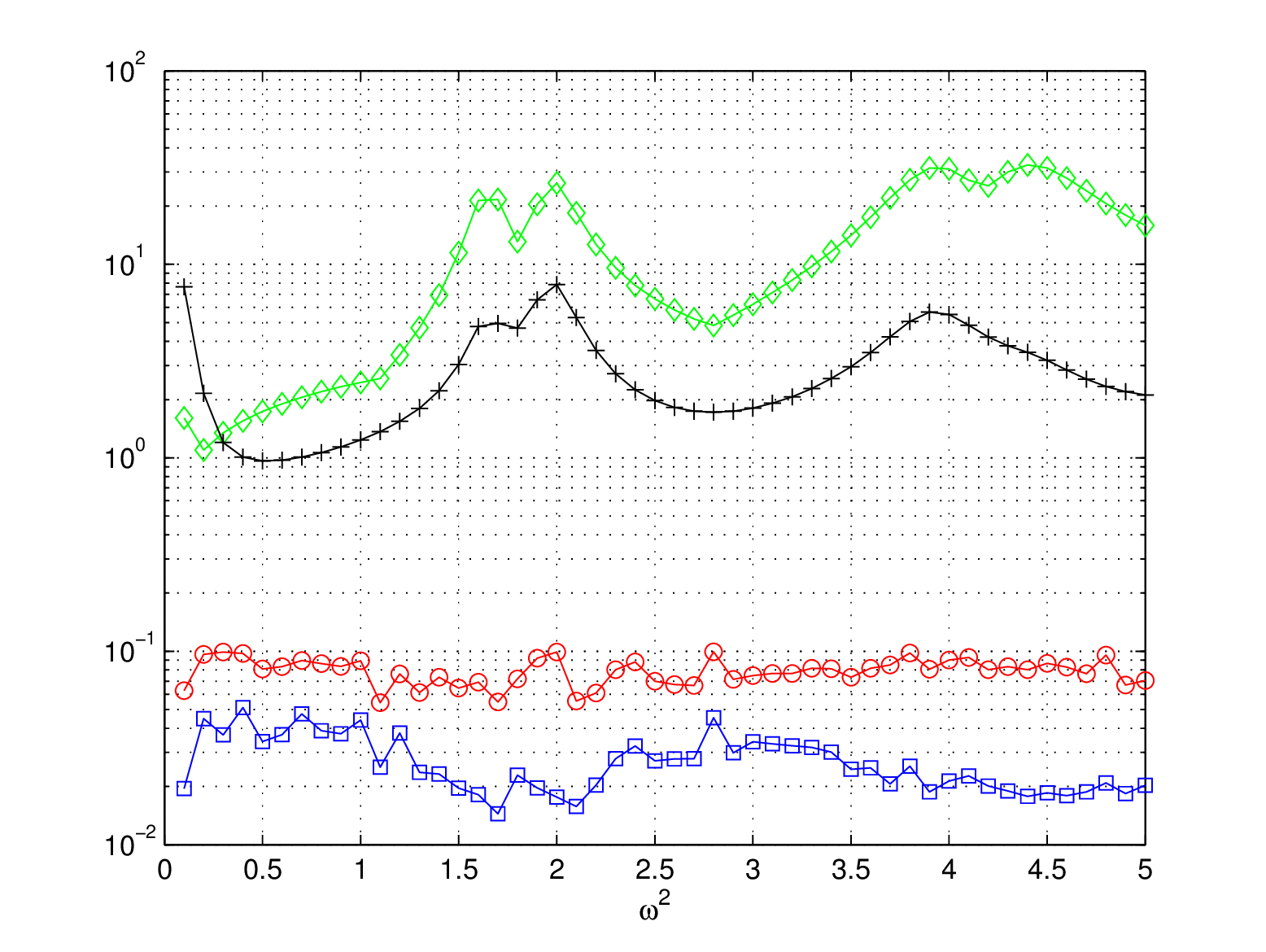}}
    \caption{Semilog plot of the energy norm of the
      solutions, the errors, estimated errors, and stability factors for each
      load case in Example 3. Legend: plus,
      energy norm; square, relative error; circle, estimated error; diamond, stability factor.
    }\label{fig:ex3_error_indicator}
  \end{center}
\end{figure}

\begin{table}[!ht]
  \centering
  \begin{tabular}{|c|c|c|c|c|c|c|c|c|c|c|}
    \hline
    $\omega^2$ & $m_0$ & $m_1$ & $m_2$ & $m_3$ & $m_4$ & $m_5$& $m_6$ & DOFs & its &  $\mathrm{EI}$ \\
    \hline
    \hline
    0.1  &  9   &   1   &   1  &    6   &   1   &   1   &   1 &   20  &  2  &  3.2035  \\
    0.2  &  9   &   1   &   1  &   39   &   1   &   1   &   2 &   54  &  3  &  2.0838  \\ 
    0.3  & 12   &   1   &   1  &   62   &   1   &   1   &   3 &   82  &  2  &  2.5641  \\
    0.4  & 14   &   1   &   1  &  120   &   1   &   1   &   2 &  108  &  5  &  2.3731  \\
    0.5  & 13   &   1   &   2  &  130   &   1   &   1   &   5 &  142  &  1  &  1.7461  \\
    0.6  & 15   &   1   &   1  &  157   &   1   &   1   &   3 &  154  &  3  &  2.2068  \\
    0.7  & 13   &   1   &   3  &  165   &   1   &   1   &   1 &  160  &  1  &  1.8104  \\
    0.8  & 12   &   1   &   1  &  172   &   1   &   1   &   5 &  173  &  1  &  1.8084  \\
    0.9  & 15   &   1   &   1  &  182   &   1   &   1   &   4 &  179  &  1  &  1.8141  \\
    1.0  & 13   &   1   &   4  &  187   &   1   &   1   &   1 &  188  &  1  &  2.2675  \\
    1.1  & 14   &   3   &   4  &  191   &   1   &   1   &   3 &  198  &  2  &  2.2362  \\
    1.2  & 11   &   9   &   5  &  193   &   1   &   1   &   4 &  206  &  2  &  2.0555  \\
    1.3  & 12   &   4   &   8  &  192   &   1   &   1   &   6 &  201  &  1  &  2.3509  \\
    1.4  & 14   &   6   &   2  &  190   &   1   &   1   &   5 &  213  &  2  &  2.7570  \\
    1.5  & 13   &  11   &   3  &  186   &   1   &   1   &  10 &  208  &  2  &  2.9929  \\
    1.6  & 14   &  15   &   4  &  178   &   1   &   1   &  14 &  223  &  4  &  3.7636  \\
    1.7  & 13   &  18   &  12  &  172   &   1   &   1   &   7 &  209  &  1  &  4.5314  \\
    1.8  & 11   &  18   &   8  &  168   &   1   &   1   &   3 &  203  &  1  &  3.1486  \\
    1.9  & 12   &  14   &   3  &  162   &   1   &   1   &  17 &  203  &  2  &  4.4424  \\
    2.0  & 10   &  13   &  13  &  159   &   1   &   1   &  14 &  205  &  2  &  4.3318  \\
    2.1  & 16   &  14   &  10  &  156   &   1   &   1   &  11 &  189  &  1  &  4.3043  \\
    2.2  & 13   &  13   &   5  &  152   &   1   &   1   &   6 &  181  &  1  &  3.0109  \\
    2.3  & 11   &  10   &  16  &  147   &   1   &   1   &   4 &  179  &  2  &  2.6907  \\ 
    2.4  & 11   &   9   &   7  &  143   &   1   &   1   &  13 &  175  &  1  &  2.5683  \\
    2.5  & 12   &   9   &  16  &  142   &   1   &   1   &   5 &  169  &  2  &  2.5769  \\
    2.6  & 15   &   9   &   8  &  142   &   1   &   1   &   7 &  174  &  2  &  2.4149  \\
    2.7  & 14   &   9   &  12  &  145   &   1   &   1   &   9 &  172  &  1  &  2.5297  \\
    2.8  & 11   &   9   &   7  &  147   &   1   &   1   &   4 &  175  &  1  &  2.3162  \\
    2.9  & 12   &   9   &  15  &  149   &   1   &   1   &   9 &  186  &  3  &  2.3814  \\
    3.0  & 19   &  10   &   7  &  151   &   1   &   1   &   3 &  176  &  1  &  2.5042  \\
    \hline                                                                          
  \end{tabular}\caption{Load case, resulting subspace dimensions,  total number of degrees of freedom, number of required iterations, and the efficiency index, for each load case in Example 3.}\label{tab:load_case_data}
\end{table}

\section{Summary and Outlook}
We have presented an a posteriori error analysis for reduced finite
element models of the frequency response problem in linear elasticity
constructed using component mode synthesis. We have derived estimates
for the error in the displacements measured in a linear goal
functional, as well as for the error measured in the energy norm. The
estimate reflects to what degree each CMS subspace influence the error
in the reduced solution allowing the design of adaptive algorithms
that automatically determines suitable subspace dimensions. We have
demonstrated our results in several numerical examples. The numerical
results follow the theoretical predictions to a high degree of
accuracy. The future of this research concerns its application in real
world three dimensional examples.
\section{Acknowledgments}
This research is supported by SKF and the Industrial Graduate School at
Ume{\aa} University.

\bibliography{references}
\end{document}